\newtheorem{theorem}{Theorem}[section]
\newtheorem{lemma}[theorem]{Lemma}
\newtheorem{proposition}{Proposition}[section]
\theoremstyle{definition}
\newtheorem{definition}[theorem]{Definition}
\newtheorem{corollary}[theorem]{Corollary}
\newtheorem{example}[theorem]{Example}
\theoremstyle{remark}
\newtheorem{remark}[theorem]{Remark}
\numberwithin{equation}{section}
\begin{document}

\title{Symmetric cohomology of groups in low dimension}

\author{Mihai D. Staic}
\address{Department of Mathematics, Indiana University, Rawles Hall, Bloomington, IN 47405, USA }
\address{Institute of Mathematics of the Romanian Academy, PO.BOX 1-764, RO-70700 Bu\-cha\-rest, Romania}
\thanks{Research partially supported by the CNCSIS project {\it Hopf algebras, cyclic homology and monoidal categories} contract no. 560/2009.}
\curraddr{}
\email{mstaic@indiana.edu}



\date{January 1, 1994 and, in revised form, June 22, 1994.}



\begin{abstract} We give an explicit characterization for group extensions that  co\-rres\-pond to elements of the symmetric  cohomology $HS^2(G,A)$. We also give conditions for the map $HS^n(G,A)\to H^n(G,A)$ to be injective.
\end{abstract}

\maketitle


%




\section*{Introduction}

Motivated by a topological construction, in \cite{sm} we constructed an action of the symmetric group $\Sigma_{n+1}$ on $C^n(G,A)$ ($G$ is a group and $A$ is a $G$-module). We proved that this action gives a subcomplex $CS^n(G,A)=C^n(G,A)^{\Sigma_{n+1}}$ of the usual cohomology complex,  in particular we have a new cohomology theory $HS^n(G,A)$, called symmetric cohomology, and a map $HS^n(G,A)\to H^n(G,A)$. We also showed that to a topological space $M$ with no elements of order 2 or 3 in $\pi_1(M)$, one can associate an element $\alpha\in HS^3(\pi_1(M),\pi_2(M))$. The image of $\alpha$ in $H^3(\pi_1(M),\pi_2(M))$ is the classical $k$-invariant introduced by Eilenberg and MacLane in \cite{em} (also called Postnikov invariant).

In the case $n=2$ the map $HS^2(G,A)\to H^2(G,A)$  is always injective. A well known result in group theory states that the elements of $H^2(G,A)$ are in bijection with the extensions of $G$ by $A$. So, it is a natural question to ask what are the extensions that correspond to elements of the symmetric cohomology. In this paper we prove that the elements of $HS^2(G,A)$ are in  bijection with those extensions that admit a section $s$ with the property that $s(g^{-1})=s(g)^{-1}$. As a corollary we get that $HS^2(G,A)\cong H^2(G,A)$ if $G$ has no elements of order 2. 

There are similarities between the symmetric cohomology defined here  and the homology theory defined for crossed simplicial groups in \cite{fl}. However  the results in \cite{fl} do not apply to our setting. In particular it is not clear what is the relation between symmetric cohomology and the ordinary one.  We approach here this problem and give conditions on the group $A$  for the map $HS^n(G,A)\to H^n(G,A)$ to be injective.

\section{Preliminaries}

We recall from  \cite{kb} some results and notations about cohomology and group extensions.

Let $G$ be a group and $A$ a $G$-module. We set  $C^n(G,A)=\{\sigma : G^n\to A\}$ and define $\partial_n: C^n(G,A)\to C^{n+1}(G,A)$ by
\begin{eqnarray*}
\partial_n(\sigma)(g_1,...,g_{n+1})=g_1\sigma(g_2,...,g_{n+1})
-\sigma(g_1g_2,g_3,...,g_{n+1})+...+\\
+(-1)^n\sigma(g_1,...,g_ng_{n+1})+(-1)^{n+1}\sigma(g_1,...,g_n).
\end{eqnarray*} 
 Define $d^j:C^n(G,A)\to C^{n+1}(G,A)$ by 
\begin{eqnarray*}
&&d^0(\sigma)(g_1,...,g_{n+1})=g_1\sigma(g_2,...,g_{n+1}),\\
&&d^j(\sigma)(g_1,...,g_{n+1})=\sigma(g_1,...,g_jg_{j+1},...,g_{n+1}) \; {\rm for}\; 1\leq j \leq n,\\
&&d^{n+1}(\sigma)(g_1,...,g_{n+1})=\sigma(g_1,...,g_n).
\end{eqnarray*}
Let's notice that $\partial_n(\sigma)=\sum_0^{n+1}(-1)^jd^j$.
It is well known that in this way we obtain a chain complex and its homology groups are denoted by $H^n(G,A)$. 
\begin{example}
A map $\sigma:G\times G\to A$ is a 2-cocycle if: $g\sigma(h,k)-\sigma(gh,k)+\sigma(g,hk)-\sigma(g,h)=0$, and $\sigma$ is a 2-coboundary if there exist a map $\psi: G\to A$ such that:
$\sigma(g,h)=g\psi(h)-\psi(gh)+\psi(g)$.
\end{example}
In \cite{sm} it was constructed an action of $\Sigma_{n+1}$ on $C^n(G,A)$ (for every $n$) and  it was proved that it is compatible with the differential. 

It is enough to say what is the action of the transpositions $\tau_i=(i,i+1)$ for $1\leq i\leq n$. For $\sigma \in C^n(G,A)$ we define:
\begin{eqnarray*}
&&(\tau_1\sigma)(g_1,g_2,g_3,...,g_n)=-g_1\sigma((g_1)^{-1},g_1g_2,g_3,...,g_n),\\
&&(\tau_i\sigma)(g_1,g_2,g_3,...,g_n)=-\sigma(g_1,...,g_{i-2},g_{i-1}g_i,(g_i)^{-1},g_ig_{i+1},g_{i+2},...,g_n),\\
&&\; {\rm for }\; 1<i<n,\\
&&(\tau_n\sigma)(g_1,g_2,g_3,...,g_n)=-\sigma(g_1,g_2,g_3,...,g_{n-1}g_n,
(g_n)^{-1}).
\end{eqnarray*} 
\begin{proposition}(see \cite{fl} and \cite{sm}) The above formulas define an action of $\Sigma_{n+1}$ 
on $C^n(G,A)$ which is compatible with the differential $\partial$.
\label{scg}
\end{proposition}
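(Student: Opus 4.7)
The plan is to split the claim into two parts: (i) the operators $\tau_1,\dots,\tau_n$ satisfy the Coxeter presentation of $\Sigma_{n+1}$, so they generate a genuine action; and (ii) this action is compatible with the differential, which means concretely that for each generator $\tau_i$ there is a corresponding $\widetilde\tau \in \Sigma_{n+2}$ with $\partial_n \circ \tau_i = \widetilde\tau \circ \partial_n$, or equivalently that $\partial_n$ sends $\Sigma_{n+1}$-invariants to $\Sigma_{n+2}$-invariants.

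For (i), I would verify the three families of Coxeter relations directly. Involutivity $\tau_i^2 = \mathrm{id}$ is a short expansion: the two sign flips cancel, and the nested substitution collapses because $g_i g_i^{-1} = e$ (for the interior case) or $g_1 \cdot (g_1^{-1} \cdot a) = a$ by the $G$-module axiom (for $\tau_1$). The commutation relations $\tau_i \tau_j = \tau_j \tau_i$ for $|i-j| \geq 2$ are immediate, since the two substitutions only alter positions in disjoint windows of indices and can be performed in either order. The braid relation $\tau_i \tau_{i+1} \tau_i = \tau_{i+1} \tau_i \tau_{i+1}$ is the main computational step; I would expand both sides on a generic argument $(g_1,\dots,g_n)$ in three stages, simplify using $g g^{-1} = e$ wherever it arises, and match the resulting sequences of inserted factors term by term. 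The endpoints $i=1$ and $i=n-1$ must be treated separately because $\tau_1$ carries a leading $g_1$ from the $G$-action and $\tau_n$ ends in a bare group element, but the interior case reduces to a finite identity in a free group.

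For (ii), the most transparent route is to compute $\tau_i(\partial_n \sigma)$ using $\partial_n \sigma = \sum_{j=0}^{n+1} (-1)^j d^j \sigma$ and identify it with $\partial_n(\tau_i \sigma) = \sum_{j=0}^{n+1}(-1)^j d^j(\tau_i \sigma)$ up to a permutation of the summands and a matching sign change coming from the appropriate $\widetilde\tau \in \Sigma_{n+2}$. Each face $d^j$ either multiplies two consecutive arguments or drops one at the end; combining this with the definition of $\tau_i$ yields a finite list of cases, and in each case the substitution inside $d^j(\tau_i \sigma)$ agrees with the substitution that $\widetilde\tau$ applies to $d^{j'}(\sigma)$ for some $j'$. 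Alternatively, one can appeal to the crossed simplicial group framework of \cite{fl}: the formulas for the $\tau_i$ are exactly those which realize the symmetric crossed simplicial group $\{\Sigma_{n+1}\}_n$ acting on the standard bar complex computing $H^*(G,A)$, and compatibility of the action with $\partial$ follows from the defining axioms of a crossed simplicial group.

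The main obstacle is purely bookkeeping: the signs, the shifting of indices under face operators, and the special behavior of $\tau_1$ (which uses the $G$-module structure) and $\tau_n$ (whose formula drops the last variable) must each be handled as individual cases before the generic interior computation can proceed. Once those boundary cases are pinned down, both the braid relation and the compatibility with $\partial_n$ amount to matching finitely many terms with the correct signs, so the verification is long but mechanical.
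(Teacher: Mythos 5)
The paper itself gives no inline proof of this proposition --- it is quoted from \cite{fl} and \cite{sm} --- so the relevant comparison is with the direct verification those sources carry out, and your two-part plan (Coxeter relations for the $\tau_i$, then face-by-face compatibility with $\partial_n$) is exactly that route. Indeed, the face relations you would need to tabulate in part (ii) are precisely the ones the paper records at the start of its Injectivity section: $\tau_i^{(n+2)}d^j=d^j\tau_i^{(n+1)}$ for $i<j$, $\tau_i^{(n+2)}d^j=d^j\tau_{i-1}^{(n+1)}$ for $j+2\leq i$, and $\tau_i^{(n+2)}d^{i-1}=-d^i$, $\tau_i^{(n+2)}d^i=-d^{i-1}$. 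Your alternative appeal to crossed simplicial groups is likewise the paper's own citation of \cite{fl}. So the strategy is right, but two of your stated justifications need repair before the proof goes through.

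First, the commutation relations for $\lvert i-j\rvert\geq 2$ are not ``immediate, since the two substitutions only alter positions in disjoint windows'': for $j=i+2$ the windows $\{i-1,i,i+1\}$ and $\{i+1,i+2,i+3\}$ overlap in the slot $i+1$, where $\tau_i$ multiplies the entry on the left by $g_i$ and $\tau_{i+2}$ multiplies it on the right by $g_{i+2}$. Commutativity does hold, but because left and right multiplications commute by associativity --- a one-line check, yet one your argument as written would skip. Second, and more seriously, your proposed formulation of compatibility, that for each $\tau_i$ there is a single $\widetilde\tau\in\Sigma_{n+2}$ with $\partial_n\circ\tau_i=\widetilde\tau\circ\partial_n$, is false, and it is \emph{not} equivalent to the statement you need. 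The relations above attach \emph{different} permutations to different faces ($\tau_i^{(n+1)}$ for $j>i$, $\tau_{i-1}^{(n+1)}$ for $j\leq i-2$), and the two exceptional faces carry a bare sign with no permutation at all (e.g.\ $\tau_1^{(n+2)}d^0=-d^1$, since the face multiplies $g_1\cdot g_1^{-1}=e$ and kills the inserted inverse); no single $\widetilde\tau$ can intertwine the full alternating sum. The correct target is only the weaker claim that $\partial_n$ carries $\Sigma_{n+1}$-invariants to $\Sigma_{n+2}$-invariants: for $\sigma$ invariant, the terms with $j\neq i-1,i$ in $\tau_i^{(n+2)}\partial_n\sigma=\sum_j(-1)^j\tau_i^{(n+2)}d^j\sigma$ are unchanged, while the $j=i-1$ and $j=i$ terms swap with compensating signs, giving $\tau_i^{(n+2)}\partial_n\sigma=\partial_n\sigma$. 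With the equivalence claim deleted and the invariance computation substituted, your case analysis is exactly the needed argument.
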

\begin{definition} The subcomplex of the invariants is called the symmetric cochain and is denoted  $CS^n(G,A)=C^n(G,A)^{\Sigma_{n+1}}$. Its homology is called the symmetric cohomology of $G$ with coefficients in A  and is denoted 
$HS^n(G,A)$.
\end{definition}
\begin{remark}
There is a natural map from $HS^n(G,A)$ to $H^n(G,A)$.
\end{remark}
\begin{example} 
One can see that $\psi:G\to A$ is symmetric if $\psi(g)=-g\psi(g^{-1})$, and  
$\sigma:G\times G\to A$ is symmetric if: $\sigma(g,h)=-g\sigma(g^{-1},gh)=-\sigma(gh,h^{-1})$. 
\end{example}
The following computation was done by S. Van Ault \cite{sva} using GAP.
\begin{example}  We denote by $\mathbb{Z}_n$ the cyclic group of order n. With the trivial $\mathbb{Z}_2$ (respectively $\mathbb{Z}_4$) action on $\mathbb{Z}$ we have: $H^2(\mathbb{Z}_2,\mathbb{Z})=\mathbb{Z}_2$, $HS^2(\mathbb{Z}_2,\mathbb{Z})=0$, $H^2(\mathbb{Z}_4,\mathbb{Z})=\mathbb{Z}_4$ and  $HS^2(\mathbb{Z}_4,\mathbb{Z})=\mathbb{Z}_2$.
\end{example}

By an extension of $G$ by $A$ we mean a short exact sequence:
 \begin{eqnarray}0\rightarrow  A\stackrel{i}{\rightarrow} X  \stackrel{\pi}{\rightarrow} G\rightarrow  0.\label{extension}
 \end{eqnarray}
Two extensions are equivalent if there exists a morphism $f:X\to X'$ such that $fi=i'$ and $\pi 'f=\pi$. To a section $t:G\to X$ of the extension (\ref{extension}) one can associate a 2-cocycle:
\begin{eqnarray}
\sigma(g,h)=i^{-1}(t(g)t(h)t(gh)^{-1}).
\end{eqnarray}

\begin{theorem} (see \cite{kb}) There is a one to one correspondence between the elements of $H^2(G,A)$ and the set of equivalence classes of extensions of $G$  by $A$ \label{cohomologygroup}.
\end{theorem}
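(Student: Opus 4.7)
The plan is to construct explicit mutually inverse maps between equivalence classes of extensions of $G$ by $A$ and elements of $H^2(G,A)$. First, starting from an extension as in (\ref{extension}), I would choose a set-theoretic section $t: G \to X$ satisfying $\pi \circ t = \mathrm{id}_G$ and (after normalization) $t(e) = 1_X$. Because both $t(g)t(h)$ and $t(gh)$ project to $gh$, the element $t(g)t(h)t(gh)^{-1}$ lies in $i(A)$, so the formula defining $\sigma$ in the excerpt gives a well-defined map $\sigma : G \times G \to A$. Expanding the associativity identity $(t(g)t(h))t(k) = t(g)(t(h)t(k))$ and using that conjugation on $i(A)$ by $t(g)$ realizes the $G$-action of $g$ on $A$ yields exactly the 2-cocycle relation from the first example. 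If $t'$ is a second section, then $t'(g) = i(\psi(g))t(g)$ for a uniquely determined 1-cochain $\psi : G \to A$, and a direct computation shows the two associated cocycles differ by $\partial\psi$; hence the class in $H^2(G,A)$ is an invariant of the extension.

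For the inverse, given a representative cocycle $\sigma$ (normalized so that $\sigma(e,g) = \sigma(g,e) = 0$, which is always possible within its class), I would put $X_\sigma = A \times G$ as a set and define the product
\[ (a,g)(b,h) = (a + gb + \sigma(g,h),\; gh). \]
Associativity is precisely the 2-cocycle condition, the identity is $(0,e)$ by normalization, and inverses can be written down explicitly. The inclusion $a \mapsto (a,e)$ and projection $(a,g) \mapsto g$ then form an extension, and the tautological section $t(g) = (0,g)$ recovers $\sigma$. Cohomologous cocycles $\sigma' = \sigma + \partial\psi$ give equivalent extensions via $(a,g) \mapsto (a + \psi(g), g)$, and conversely any equivalence $f : X \to X'$ sends a chosen section $t$ to a set map $f \circ t$, which compared with a section $t'$ of $X'$ exhibits the two associated cocycles as cohomologous.

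The two constructions are visibly mutually inverse on classes, which finishes the argument. The main technical obstacle is bookkeeping around normalization --- insisting that $\sigma$ vanish on pairs involving the identity and that sections carry $e$ to $1_X$ --- so that the identity element, inverses, and the equivalences above can be written down cleanly, and so that the cohomology class is genuinely represented by a normalized cocycle. Once this is pinned down, all remaining verifications reduce to mechanical applications of the cocycle and coboundary identities.
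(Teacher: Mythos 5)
Your proposal is correct and is essentially the standard argument: the paper itself offers no proof, citing Brown \cite{kb}, whose proof is exactly your construction (cocycle from a section, crossed product $A\times G$ with multiplication $(a,g)(b,h)=(a+gb+\sigma(g,h),gh)$ from a cocycle, with normalization handling the identity and equivalences). Indeed, Section~2 of the paper uses this very same crossed-product construction and section $s(g)=(0,g)$, so your argument aligns with the approach the paper takes for granted.
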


\section{Symmetric Extensions}

First we prove a result which was stated in \cite{sm}.
\begin{lemma} The map $HS^2(G,A)\to H^2(G,A)$ is one to one. \label{symmonetoone} 
\end{lemma}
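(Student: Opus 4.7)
The plan is to show that if a symmetric $2$-cocycle $\sigma \in CS^2(G,A)$ is a coboundary in $C^2(G,A)$, then it is actually a coboundary in $CS^2(G,A)$. Concretely, starting from any $\psi \in C^1(G,A)$ with $\partial\psi = \sigma$, I will argue that $\psi$ itself must already satisfy the symmetry condition $\psi(g) = -g\psi(g^{-1})$, i.e.\ that $\psi$ lies in $CS^1(G,A)$. Thus no modification of $\psi$ by a $1$-cocycle is needed.

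The key step is to feed the explicit symmetry relations for a $2$-cocycle (recorded in the example right after the definition of $CS^n$) into the formula $\sigma = \partial\psi$. In particular I would use the relation $\sigma(g,h) = -\sigma(gh, h^{-1})$. On the left side $\partial\psi$ gives $g\psi(h) - \psi(gh) + \psi(g)$, and on the right side, after expanding $\sigma(gh,h^{-1})$ as $gh\psi(h^{-1}) - \psi(g) + \psi(gh)$ and negating, the terms $\psi(g)$ and $\psi(gh)$ cancel against their counterparts on the left. What remains is the identity $g\psi(h) = -gh\psi(h^{-1})$; applying $g^{-1}$ yields $\psi(h) = -h\psi(h^{-1})$, which is exactly the symmetry of $\psi$.

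I expect no real obstacle: the pleasant feature of dimension $2$ is that the second symmetry relation on $\sigma$ is strong enough to force the primitive $\psi$ to be symmetric on the nose, without having to subtract a correction term (which is what one would generally need in higher degree, and indeed this is where the later injectivity discussion becomes nontrivial). Once the computation above is in hand, the conclusion is immediate: $\psi \in CS^1(G,A)$, so the class $[\sigma]$ vanishes already in $HS^2(G,A)$, proving injectivity.
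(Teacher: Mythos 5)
Your proposal is correct and matches the paper's own proof essentially verbatim: the paper also substitutes $\sigma=\partial\psi$ into the symmetry relations (it writes out both $\sigma(g,h)=-g\sigma(g^{-1},gh)$ and $\sigma(g,h)=-\sigma(gh,h^{-1})$, though as you note one relation suffices), cancels terms to obtain $\psi(g)=-g\psi(g^{-1})$, and concludes $\sigma\in BS^2(G,A)$. Your computation with the relation $\sigma(g,h)=-\sigma(gh,h^{-1})$ checks out exactly, so nothing further is needed.
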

\begin{proof} 
If $\sigma \in (ZS)^2(G,A) \cap B^2(G,A)$ then:
$$\sigma(g,h)=g\psi(h)-\psi(gh)+\psi(g)$$
Also $\sigma$ is symmetric:
\begin{equation*}
\sigma(g,h)=-g\sigma(g^{-1},gh)=-\psi(gh)+g\psi(h)-g\psi(g^{-1})
\end{equation*}
\begin{equation*}
\sigma(g,h)=-\sigma(gh,h^{-1})=-gh\psi(h^{-1})+\psi(g)-\psi(gh)
\end{equation*}
Which means that:
$$\psi(g)=-g\psi(g^{-1})$$ 
and so $\sigma\in BS^2(G,A)$. 
\end{proof} 

Let  $\sigma$ be a 2-cocyle.  On $X=A\times G$ we have the following multiplication:
\begin{equation}
(a,g)(b,h)=(a+gb+\sigma(g,h),gh)
\end{equation}
Define a section $s:G\to X$ by $s(g)=(0,g)$. 
Suppose that $\sigma$ is a symmetric cocycle, then we have:
\begin{equation*}
\sigma(g,h)+g\sigma(g^{-1},gh)=0\\
\end{equation*} 
\begin{equation*}
\sigma(g,h)+\sigma(gh,h^{-1})=0
\end{equation*} 
Let's notice that:
\begin{equation*}
s(g)(s(g^{-1})s(gh))=(g\sigma(g^{-1},gh)+\sigma(g,h),gh)=(0,gh)=s(gh)
\end{equation*} 
\begin{equation*}
(s(gh)s(h^{-1}))s(h)=(\sigma(g,h)+\sigma(gh,h^{-1}),gh)=(0,gh)=s(gh)
\end{equation*} 
And so we get:
$$s(g^{-1})=s(g)^{-1}$$
Suppose that we have a section $t:G\to X$ of (\ref{extension}) such that 
 \begin{eqnarray}t(g^{-1})=t(g)^{-1}.\label{symmsection}
 \end{eqnarray}
To this section we associate a 2-cocylce 
\begin{eqnarray}\sigma(g,h)=i^{-1}(t(g)t(h)t(gh)^{-1}).\label{symmcocycle}
\end{eqnarray}
We want to prove that $\sigma$ a symmetric cocycle. Indeed we have:
\begin{eqnarray*}
g\sigma(g^{-1},gh)&=&gi^{-1}(t(g^{-1})t(gh)t(h^{-1}))\\
&=&i^{-1}(t(g)((t(g^{-1})t(gh)t(h^{-1}))t(g^{-1}))\\
&=&i^{-1}(t(gh)t(h)^{-1}t(g)^{-1})\\
&=&i^{-1}((t(g)t(h)t(gh)^{-1})^{-1})\\
&=&-i^{-1}(t(g)t(h)t(gh)^{-1})\\
&=&-\sigma(g,h).
\end{eqnarray*}
\begin{eqnarray*}
\sigma(gh,h^{-1})&=&i^{-1}(t(gh)t(h^{-1})t(g^{-1}))\\
&=&i^{-1}((t(g)t(h)t(gh)^{-1})^{-1})\\
&=&-i^{-1}(t(g)t(h)t(gh)^{-1})\\
&=&-\sigma(g,h).
\end{eqnarray*}

Take now two sections $t$ and $u$ of $\pi$ which satisfy (\ref{symmsection}). To these sections we  associate 2-cocycles $\sigma_1$ and $\sigma_2$  as in (\ref{symmcocycle}). These 
2-cocycles are symmetric and cohomologicaly equivalent. From  Lemma \ref{symmonetoone} we see that $\sigma_1$ and $\sigma_2$ give the same class in $HS^2(G,A)$.  To conclude from the above discussion we have:
\begin{theorem} There is a one to one correspondence between the elements of $HS^2(G,A)$ and the set of extensions that admit a section $t$ which satisfy (\ref{symmsection}). \label{main}
\end{theorem}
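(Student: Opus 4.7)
The plan is to synthesize the two directions of the correspondence that are essentially written out in the discussion preceding the theorem, and to check well-definedness via Lemma \ref{symmonetoone}. Theorem \ref{cohomologygroup} already supplies the bijection between $H^2(G,A)$ and equivalence classes of extensions, so the task is really to identify which extensions (up to equivalence) correspond to classes lying in the image of $HS^2(G,A) \hookrightarrow H^2(G,A)$.

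In one direction, given $[\sigma] \in HS^2(G,A)$, I would choose a symmetric 2-cocycle representative and form the extension $X = A \times G$ with the twisted multiplication. The explicit computations just above show that the canonical section $s(g) = (0,g)$ satisfies $s(g^{-1}) = s(g)^{-1}$, so this assignment lands in the set of extensions admitting a symmetric section. In the other direction, for an extension equipped with a symmetric section $t$, the associated 2-cocycle $\sigma(g,h) = i^{-1}(t(g)t(h)t(gh)^{-1})$ is symmetric by the two identities computed above, yielding a class in $HS^2(G,A)$.

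The main obstacle is verifying that the reverse map is well-defined on equivalence classes. If $t$ and $u$ are two symmetric sections of the same extension (or of equivalent extensions), the associated symmetric cocycles $\sigma_t$ and $\sigma_u$ are cohomologous in $H^2(G,A)$ by the classical correspondence, but \emph{a priori} they need not be cohomologous in $HS^2(G,A)$; this is precisely where Lemma \ref{symmonetoone} is indispensable, since it upgrades an equality in $H^2$ between symmetric cocycles to an equality in $HS^2$. Once this is in hand, checking that the two maps are mutually inverse reduces to standard facts about the crossed-product construction: starting from a symmetric $\sigma$, the canonical section of $A \times G$ reproduces $\sigma$ on the nose, and starting from $(X,t)$, the usual equivalence between $X$ and the crossed product built from $t$ matches $t$ with the canonical section $s$.
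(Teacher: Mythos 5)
Your proposal is correct and follows essentially the same route as the paper: the paper's proof of Theorem \ref{main} is precisely the preceding discussion (crossed product with canonical section $s(g)=(0,g)$ in one direction, the cocycle $\sigma(g,h)=i^{-1}(t(g)t(h)t(gh)^{-1})$ in the other, with Lemma \ref{symmonetoone} upgrading cohomologous symmetric cocycles to equality in $HS^2(G,A)$), which you have accurately reassembled. Your added remark that the maps are mutually inverse via the standard equivalence between $X$ and the crossed product built from $t$ is a detail the paper leaves implicit, but it is not a different argument.
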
 
\begin{proof} It follows from above discussion.
\end{proof}
\begin{corollary} If $G$ is a group with no elements of order 2 then 
$HS^2(G,A)\cong H^2(G,A)$.
\end{corollary}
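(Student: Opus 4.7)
The plan is to combine Lemma \ref{symmonetoone} with Theorem \ref{main} and Theorem \ref{cohomologygroup}. Lemma \ref{symmonetoone} already provides injectivity of $HS^2(G,A) \to H^2(G,A)$, so the content of the corollary is surjectivity, and by Theorem \ref{main} this amounts to proving that \emph{every} extension $0 \to A \to X \to G \to 0$ admits a section $s$ satisfying $s(g^{-1}) = s(g)^{-1}$. Equivalently, one can start from an arbitrary set-theoretic section $t$ (which exists because $\pi$ is surjective) and show that $t$ can be replaced by a section with the symmetry property.

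To construct such an $s$, I would exploit the hypothesis on $G$ directly. Since $G$ has no element of order $2$, the inversion map $g \mapsto g^{-1}$ is a fixed-point-free involution on $G \setminus \{1\}$, so this set partitions into two-element orbits $\{g, g^{-1}\}$. Choose (using the axiom of choice) a set $S$ of representatives, one from each such orbit. Then define
\begin{equation*}
s(1) = 1_X, \qquad s(g) = t(g) \text{ for } g \in S, \qquad s(g^{-1}) = t(g)^{-1} \text{ for } g \in S.
\end{equation*}
Since $\pi$ is a group homomorphism, $\pi(t(g)^{-1}) = g^{-1}$, so $s$ is genuinely a section of $\pi$. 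The relation $s(g^{-1}) = s(g)^{-1}$ holds by construction on each of the three pieces, with the identity handled because $1_X^{-1} = 1_X$. Theorem \ref{main} then yields a class in $HS^2(G,A)$ mapping to the chosen extension's class in $H^2(G,A)$, establishing surjectivity.

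There is essentially no serious obstacle: the proof is structural, reducing to the combinatorial observation that the involution $g \mapsto g^{-1}$ has no fixed points outside $1$. The only care required is the consistency check at the identity and the verification that $\pi \circ s = \mathrm{id}_G$, both of which are immediate. Thus the corollary follows as a one-line consequence of Lemma \ref{symmonetoone}, Theorem \ref{main}, and this pairwise construction of a symmetric section.
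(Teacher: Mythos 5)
Your proof is correct and takes essentially the same route as the paper: combine Lemma \ref{symmonetoone}, Theorem \ref{main}, and Theorem \ref{cohomologygroup}, reducing everything to the claim that when $G$ has no elements of order $2$ every extension admits a section with $s(g^{-1})=s(g)^{-1}$. The paper asserts that claim in a single sentence without detail, and your pairing of $G\setminus\{1\}$ into orbits $\{g,g^{-1}\}$ of the fixed-point-free inversion involution (choosing representatives and setting $s(g)=t(g)$, $s(g^{-1})=t(g)^{-1}$, $s(1)=1_X$) is exactly the implicit argument, made explicit.
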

\begin{proof} If $G$ is a group with no element of order 2 then any extension has a section which satisfy (\ref{symmsection}). The corollary follows now from  Theorem \ref{cohomologygroup} and  Theorem \ref{main}.
\end{proof}
\begin{example}There are two extensions of $\mathbb{Z}_2$ by $\mathbb{Z}$:
$$0\rightarrow  \mathbb{Z}\rightarrow \mathbb{Z}\times \mathbb{Z}_2  \rightarrow \mathbb{Z}_2 \rightarrow  0$$
$$0\rightarrow  \mathbb{Z}\rightarrow \mathbb{Z} \rightarrow \mathbb{Z}_2 \rightarrow  0.$$
The first one admits a section which satisfies (\ref{symmsection}) the second one does not. This corresponds to the fact that $H^2(\mathbb{Z}_2,\mathbb{Z})=\mathbb{Z}_2$ and $HS^2(\mathbb{Z}_2,\mathbb{Z}_n)=0$.
\end{example}

\begin{example}There are four  nonequivalent extensions of $\mathbb{Z}_4$ by $\mathbb{Z}$. 
\begin{eqnarray}
0\rightarrow \mathbb{Z}\stackrel{i}{\rightarrow} \mathbb{Z}\times \mathbb{Z}_4  \stackrel{\pi}{\rightarrow} \mathbb{Z}_4\rightarrow  0,
\end{eqnarray}
with $i(x)=(x,0)$, $\pi(a,\widehat{b})=\widehat{b}$. Take a  section  $t(\widehat{x})=(0,\widehat{x})$. 
\begin{eqnarray} 
0\rightarrow  \mathbb{Z}\stackrel{j}{\rightarrow} \mathbb{Z}\times \mathbb{Z}_2  \stackrel{\delta}{\rightarrow} \mathbb{Z}_4\rightarrow  0,
\end{eqnarray}
with $j(x)=(2x,\overline{x})$, $\delta(a,\overline{b})=\widehat{a+2b}$. For this  extension we can take a section  $s$ defined by $s(\widehat{0})=(0,\overline{0})$, $s(\widehat{1})=(-1,\overline{1})$,  $s(\widehat{2})=(0,\overline{1})$ and $s(\widehat{3})=(1,\overline{1})$.
\begin{eqnarray} 
0\rightarrow  \mathbb{Z}\stackrel{k}{\rightarrow} \mathbb{Z}  \stackrel{\epsilon}{\rightarrow} \mathbb{Z}_4\rightarrow  0,
\end{eqnarray}
where $k(x)=-4x$ and $\epsilon(a)=\widehat{a}$.
\begin{eqnarray}
0\rightarrow  \mathbb{Z}\stackrel{l}{\rightarrow} \mathbb{Z}  \stackrel{\gamma}{\rightarrow} \mathbb{Z}_4\rightarrow  0,
\end{eqnarray}
where $l(x)=4x$ and $\gamma(a)=\widehat{a}$. 
As one can see the first two extensions admit a section which satisfy (\ref{symmsection}) the last two  do not. This corresponds to the fact that $H^2(\mathbb{Z}_4,\mathbb{Z})=\mathbb{Z}_4$ and $HS^2(\mathbb{Z}_4,\mathbb{Z})=\mathbb{Z}_2$.
\end{example}

\begin{example} Let $B_n$ be the braid group and $P_n$ the pure braid group. Consider the  extension 
$$0\to P_n/[P_n,P_n]\to B_n/[P_n,P_n]\to \Sigma_n\to 0$$
One can show that there is no element of order two in $B_n/[P_n,P_n]$ whose image in $\Sigma_n$ is the transposition $\tau_1=(1,2)$. In particular this means that the above extension is not symmetric.
\end{example}



\section{Injectivity}

We denote by $\tau_i^{(n+1)}$ the transposition $(i,i+1)\in \Sigma_{n+1}$. We have the following relations between the action of $\Sigma_{n+1}$,  $\Sigma_{n+2}$ and the maps $d^j:C^n(G,A)\to C^{n+1}(G,A)$. 
\begin{eqnarray}
&&\tau_i^{(n+2)}d^j=d^j\tau_i^{(n+1)}\;  {\rm if} \; i< j
\end{eqnarray}
\begin{eqnarray}
&&\tau_i^{(n+2)}d^j=d^j\tau_{i-1}^{(n+1)} \; {\rm if} \; j+2\leq i \label{d_0}
\end{eqnarray}
\begin{eqnarray}
&&\tau_i^{(n+2)}d^{i-1}=-d^i\label{d_10}
\end{eqnarray}
\begin{eqnarray}
&&\tau_i^{(n+2)}d^i=-d^{i-1}
\end{eqnarray}
We denote by $\Sigma_{n+1}(k:n+k)$  the group of permutations for  the symbols $k,k+1,...,n+k$. As a convention we write $\Sigma_{n+1}$ for  $\Sigma_{n+1}(1:n+1)$ Define 
$$S_{n+1}(k:n+k)=(\sum_{\sigma\in \Sigma_{n+1}(k:n+k)}\sigma)\in {\bf Z}[\Sigma_{n+1}(k:n+k)]$$
By counting the elements in the left and right sum one can notice that 
$$S_{n+2}(1:n+2)=(1+\tau_1+\tau_2\tau_1+...+\tau_{n+1}\tau_n ...\tau_2\tau_1)S_{n+1}(2:n+2).$$
Also  using \ref{d_10} one gets:
\begin{eqnarray*}
&&\tau_1d^0=-d^1,\\
&&\tau_2\tau_1d^0=-\tau_2d^1=d^2\\
&&...\\
&&\tau_n\tau_{n-1}...\tau_2\tau_1d^0=(-1)^nd^n \label{d_0n}
\end{eqnarray*}
 which give: 
\begin{eqnarray*}
\partial_n&=&d^0-d^1+d^2-...+(-1)^{n+1}d_{n+1}\\
&=&d^0+\tau_1d_0+\tau_2\tau_1d^0+...+\tau_{n+1}\tau_n...\tau_2\tau_1d^0
\end{eqnarray*}
Using the above equality and (\ref{d_0}) we get:
\begin{eqnarray*}
\partial_nS_{n+1}&=&(1+\tau_1+\tau_2\tau_1+...+\tau_{n+1}...\tau_2\tau_1)d^0S_{n+1}(1:n+1)\\
&=&(1+\tau_1+\tau_2\tau_1+...+\tau_{n+1}...\tau_2\tau_1)S_{n+1}(2:n+2)d^0\\
&=&S_{n+2}(1:n+2)d^0\\
&=&S_{n+2}d^0
\end{eqnarray*}
Also one can notice that:
\begin{eqnarray*}
S_{n+2}d^0=S_{n+2}(-\tau_1)d^1=-S_{n+2}d^1
\end{eqnarray*}
or more generally:
\begin{eqnarray}
S_{n+2}d^0=-S_{n+2}d^1=S_{n+2}d^2=...=
(-1)^{n+1}S_{n+2}d^{n+1}
\end{eqnarray}
If we add everything together we get:
\begin{eqnarray}
(n+2)\partial_nS_{n+1}=S_{n+2}\partial_n \label{eqc}
\end{eqnarray}

\begin{proposition}
Suppose that $A$ is a group such that $n+1$ is not a zero divisor  and the equation  $n!x=a$ has exactly one solution.  Then the natural map $i:HS^n(G,A)\to H^n(G,A)$ is one to one. 
\end{proposition}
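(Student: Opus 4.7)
The plan is to unwind injectivity at the chain level and exploit the key relation (\ref{eqc}). Take a class in $HS^n(G,A)$ in the kernel of $i$: that is, a symmetric cocycle $\sigma\in CS^n(G,A)$ which is an ordinary coboundary, say $\sigma=\partial_{n-1}\tau$ for some $\tau\in C^{n-1}(G,A)$. The goal is to produce a cochain $\tau'\in CS^{n-1}(G,A)$ with $\partial_{n-1}\tau'=\sigma$, which would show $\sigma$ already vanishes in $HS^n(G,A)$.

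The engine is (\ref{eqc}) applied one degree lower, namely $(n+1)\partial_{n-1}S_n=S_{n+1}\partial_{n-1}$. Evaluating on $\tau$ gives
\begin{equation*}
(n+1)\,\partial_{n-1}(S_n\tau)=S_{n+1}\partial_{n-1}\tau=S_{n+1}\sigma=(n+1)!\,\sigma,
\end{equation*}
where the last equality uses that $\sigma$ is $\Sigma_{n+1}$-invariant, so every term of $S_{n+1}\sigma$ equals $\sigma$. Since $n+1$ is not a zero divisor in $A$, cancellation yields
\begin{equation*}
\partial_{n-1}(S_n\tau)=n!\,\sigma.
\end{equation*}
Note that $S_n\tau$ lies in $CS^{n-1}(G,A)$ automatically, since $\pi S_n=S_n$ for any $\pi\in\Sigma_n$.

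To finish, I use the second hypothesis: multiplication by $n!$ is a bijection on $A$. Define $\tau'\in C^{n-1}(G,A)$ pointwise to be the unique solution of $n!\,\tau'=S_n\tau$. Uniqueness of this division propagates the symmetries: for $\pi\in\Sigma_n$, $n!(\pi\tau')=\pi(n!\tau')=\pi(S_n\tau)=S_n\tau=n!\tau'$ forces $\pi\tau'=\tau'$, so $\tau'\in CS^{n-1}(G,A)$; and $n!\,\partial_{n-1}\tau'=\partial_{n-1}(n!\tau')=\partial_{n-1}(S_n\tau)=n!\,\sigma$ forces $\partial_{n-1}\tau'=\sigma$. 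Hence $\sigma\in BS^n(G,A)$ and $i$ is injective.

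I do not anticipate a substantive obstacle: the only care needed is to see that each of the two hypotheses plays its specific role --- the non-zero-divisor condition on $n+1$ is exactly what is required to remove the factor produced by (\ref{eqc}), while unique divisibility by $n!$ is exactly what allows the symmetric antiderivative $\tau'$ to be built from $S_n\tau$ and then to be recognized, again by uniqueness, as a genuine primitive of $\sigma$.
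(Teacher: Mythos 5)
Your proof is correct and takes essentially the same route as the paper's: both apply the degree-shifted form of (\ref{eqc}), namely $(n+1)\partial_{n-1}S_n=S_{n+1}\partial_{n-1}$, to a coboundary representative, use that $\sigma$ is invariant to get $S_{n+1}\sigma=(n+1)!\,\sigma$, cancel $n+1$ via the non-zero-divisor hypothesis, and divide by $n!$ via unique solvability. If anything, you are more careful than the paper, which leaves implicit the cancellation step, the fact that $\frac{1}{n!}S_n\beta$ is $\Sigma_n$-invariant, and that division by $n!$ commutes with $\partial_{n-1}$ --- all of which you verify via the uniqueness of the solution of $n!x=a$.
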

\begin{proof}
Take $\alpha\in ZS^n(G,A)$ such that $\alpha=\partial_{n-1}(\beta)$ for some $\beta \in C^{n-1}(G,A)$. Since $\alpha$ is symmetric $S^{n+1}(\alpha)=(n+1)!\alpha$ and so:
\begin{eqnarray*}
(n+1)!\alpha &=&S_{n+1}\alpha\\
&=&S_{n+1}\partial_{n-1}(\beta)\\
&=&(n+1)\partial_{n-1}(S_n(\beta))
\end{eqnarray*}
this gives
$$\alpha=\partial_{n-1}(\frac{1}{n!}S^n(\beta))$$
which means that the map $i$ is injective.
\end{proof}
\begin{remark} There is another way to define the cohomology of groups. One can take the differential $\widetilde{\partial_n}:\widetilde{C}^n(G,A)\to \widetilde{C}^{n+1}(G,A)$, 
\begin{eqnarray*} \widetilde{\partial_n}(\sigma)(g_0,g_1,g_2,...,g_n)&=&g_0\sigma(g_0^{-1}g_1,g_0^{-1}g_2,...,g_0^{-1}g_n)-\sigma(g_1,g_2,...,g_n)+\\
&&+\sigma(g_0,g_2,...,g_n)-...+(-1)^{n+1}\sigma(g_0,g_1,...,g_{n-1})
\end{eqnarray*}
If we define $j_n:C^n(G,A)\to \widetilde{C}^n(G,A)$,   
$$j_n(\sigma)(g_1,g_2,...,g_n)=\sigma(g_1,g_1g_2,...,g_1g_2...g_n)$$ 
one can show that $j_n\partial_n=\widetilde{\partial_n}j_n$ and so $\widetilde{\partial_n}$ defines the same cohomology. It was pointed to us by Z. Fiedorowicz  that the corresponding action of $\Sigma_{n+1}$ is:
\begin{eqnarray*}
&&(\tau_1\sigma)(g_1,g_2,...,g_n)=-g_1\sigma(g_1^{-1},g_1^{-1}g_2,...,g_1^{-1}g_n)\\ 
&&(\tau_2\sigma)(g_1,g_2,...,g_n)=-\sigma(g_2,g_1,g_3,...,g_n)\\
&&...\\
&&(\tau_n\sigma)(g_1,...,g_{n-2},g_{n-1},g_n)=-\sigma(g_1,...,g_{n-2},g_n,g_{n-1})
\end{eqnarray*}
\end{remark}

\section*{Acknowledgment}
I would like to thank Professor Samuel D. Schack for many useful discussions. 

\bibliographystyle{amsalpha}

\end{document}